\newtheorem{remark}{Remark}
\newtheorem{theorem}{Theorem}
\newtheorem{proposition}{Proposition}
\newtheorem{corollary}{Corollary}
\newtheorem{definition}{Definition}
\newtheorem{lemma}{Lemma}
\newtheorem{example}{Example}
\newcommand{\mP }{\mathbb{P}}
\newcommand{\me}{\mathbb{E}}
\newcommand{\mn}{\mathbb{N}}
\newcommand{\mr }{ \mathbb{R}}
\newcommand{\s}{\textbf{\textrm{S}}}
\keywords{Diffusion Processes, Stochastic Calculus, Asymptotic Cycles, Lyapunov 1-form. \\
\indent 2010 {\it Mathematics Subject Classification.  Primary: 60J60,
 37L40; Secondary:58J65, 58A10}}
\title{\textbf{Lyapunov 1-forms for diffusion Processes}}
\author{Diego S. Ledesma}
\begin{document}
\begin{abstract}
We extend the concept of Lyapunov 1-forms for the case of diffusion processes to study its asymptotic behavior. We give some examples and a condition for the existence of these objects.
\end{abstract}

\address{Departamento de
 Matem\'{a}tica, Universidade Estadual de Campinas,\\ 13.081-970 -
 Campinas - SP, Brazil.}
\email{dledesma@ime.unicamp.br}
\maketitle

\section{Introduction}

In this article we deal with geometry and stochastic processes. In particular we are interested to extend the concept
of Lyapunov 1-forms introduced (until the author knowledge) by Farber et. al. in \cite{Farber}. In that article the
authors focus their attention in dynamical systems and obtain a result on the existence of Lyapunov 1-forms and a
generalization of Conley Index Theorem. To do this they introduce a cohomological operator whose properties
joint to some other dynamical properties (chain recurrence) determine an existence result.

Here, the author is interested in generalize the concept of Lyapunov 1-form for diffusion processes. The idea is the following, in a compact manifold $M$ consider a diffusion $X$ and an invariant set $W$. Then a Lypaunov 1-form $\omega$ will be a closed 1-form in $M\backslash W$ such that the Stratonovich integral satisfy
\[
 \me\left[\int \omega ~\delta X\right]<0,
\]
for $X(0)\in M\backslash W$. This will let us to study the asymptotic behavior of the paths of the diffusion.

The main obstacle to work in the same way that Farber et. al. is that there are no pretty good (for this set up) definitions for chain
recurrence on diffusions. So we will adapt an idea of Lastchev \cite{Latschev} to obtain an existence result.

There are several approaches to define the integral of 1-forms along stochastic processes (see, e.g., Manabe \cite{Manabe} and Manabe et. al. \cite{Manabe2}) we will use the approach of the second order geometry introduced by Schwartz in the early 80's, which it seems to be
the natural set up to do stochastic calculus on manifolds. The best introduction to the subject is the book of Emery
\cite{emery}.

In section 2 we define the operator the cohomological operator $  \mathcal{J}^L_\mu$ in terms of the diffusion
invariant
measures $\mu$ and the diffusor $L$ and study its properties. We give special attention to diffusions with infinitesimal
generator of the form $L=\frac{1}{2}\Delta_M+V$ with $V$ a vector field. Also we relate $ \mathcal{J}^L_\mu$ with some Homology
cycles which, following S. Schwartzman \cite{SolS}, we may call them asymptotic cycles. Finally in section 3 we introduce
Lyapunov 1-forms, we give an existence result (Theorem 1) and some examples.

\section{Asymptotic Cycles}

In this work $(M,g)$ will be a compact Riemannian manifold and $X$ be a Stochastic process in
$M$. Let $L$ be a semi elliptic second order differential operator in $M$ without constant term and with coeficients in $C^2(M)$. We say that the stochastic process
$X$ is an $L$-diffusion if the real process \footnote{The operator $L$ is the generator of the Markov semigroup
associated to $X$}
\[
 f(X)-f(X_0)-\int (Lf)(X)~dt
\]
is a local martingale for any $f\in C^\infty(M)$.

Consider a $L$-diffusion $X$ in $M$ and define the subset $\mathcal{M}_L$ of the measure space $\mathcal{M}$ as
follows: $\mu\in\mathcal{M}_L$ if
\begin{eqnarray}\label{condition}
 \int_M(Lf)~\mu=0
\end{eqnarray}
for any function $f\in C^\infty(M)$. It is easy to see that $\mathcal{M}_L$ is non empty because, since $M$ is compact,
there are invariant measures and it is an easy exercise to see that invariant measures satisfy the equation
(\ref{condition}) for any $f$ (see, for example, Ikeda and Watanabe \cite{IW}).

Let $\s:\Omega^1(M)\rightarrow \Gamma(\tau^* M)$  be the Stratonovich operator (see \cite[Theorem 7.1]{emery}). The Stratonovich Integral of a $C^\infty-$one form $\alpha$ with respect to a semimartingale $X$ (see \cite[Definition 7.3]{emery}) is given by
\[
 \int\alpha~\delta  X=\int \s\alpha~\mathbb{D}X.
\]
\begin{example} Let $X$ be the solution of
\[
 dX=V(X)~dt+\sum_{i=1}^n X_i(X)~\delta W^i
\]
 for $X(0)=x$, where $(W_1,\ldots,W_n)$ is a Brownian motion in $\mr^n$ and $V,X_1,\ldots, X_n\in\Gamma(TM)$. Then, the infinitesimal generator is $L=V+\sum_{i=1}^nX_i^2$ where and for a  $C^\infty-$one form $\alpha$ we have that
\[
\s\alpha(L)=\alpha(V)+\sum_{i=1}^nX_i\alpha(X_i)
\]
therefore, the Stratonovich integral of $\alpha$ with respect to $X$ is
\[
 \int\alpha~\delta  X=\int \s\alpha(L)~(X)~dt+\sum_{i=1}^n\int \alpha(X_i)(X)~dW^i.
\]
where $\delta$ denote the Stratonovich differential.
\end{example}

For a measure $\mu$ in $\mathcal{M}_L$ define the functional $ \mathcal{J}^L_\mu$  over the space of $C^\infty-$one forms $\Omega^1(M)$ as
\[
  \mathcal{J}^L_\mu(\alpha)=\int_M\s\alpha(L)~\mu
\]
for $\alpha$ in $\Omega^1(M)$. This definition is based on Farber article \cite{Farber} but the idea can be tracked back to Schwartzman \cite{SolS}.

\begin{lemma}
 The operator $ \mathcal{J}^L_\mu$ has the following properties
\begin{itemize}
 \item[i-] $ \mathcal{J}^L_\mu(\alpha+df)= \mathcal{J}^L_\mu(\alpha)$,
 \item[ii-] $ \mathcal{J}^L_\mu(\lambda\alpha+\beta)=\lambda \mathcal{J}^L_\mu(\alpha)+ \mathcal{J}^L_\mu(\beta)$,
\end{itemize}
for any $\alpha,~\beta$ in $\Omega^1(M)$, $f\in C^\infty(M)$ and $\lambda\in \mr$.
\end{lemma}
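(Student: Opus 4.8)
The plan is to deduce both statements from two ingredients: the $\mr$-linearity of the Stratonovich operator $\s$, and the identity $\s(df)(L)=Lf$ for exact forms, which reduces property (i) to the defining condition (\ref{condition}) of $\mathcal{M}_L$.

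Property (ii) is the routine one. Since $\s:\Omega^1(M)\rightarrow\Gamma(\tau^* M)$ is $\mr$-linear by its definition in \cite{emery}, we have $\s(\lambda\alpha+\beta)=\lambda\s\alpha+\s\beta$. Evaluating the resulting second order form at the second order operator $L$ is again $\mr$-linear, and integration against the fixed measure $\mu$ is linear; chaining these three steps yields $\mathcal{J}^L_\mu(\lambda\alpha+\beta)=\lambda\mathcal{J}^L_\mu(\alpha)+\mathcal{J}^L_\mu(\beta)$.

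For property (i), by (ii) it suffices to show $\mathcal{J}^L_\mu(df)=0$ for every $f\in C^\infty(M)$. The key step is the identity
\[
\s(df)(L)=Lf.
\]
This expresses that $\s(df)$ is the second differential of $f$, whose pairing with the second order tangent operator $L$ returns $Lf$; equivalently, it is the infinitesimal form of the Stratonovich chain rule $\int df~\delta X=f(X)-f(X_0)$, whose bounded variation part is exactly $\int (Lf)(X)~dt$ (cf.\ the definition of an $L$-diffusion above). In the concrete setting of the Example, with $L=V+\sum_{i=1}^n X_i^2$, it follows directly since $\s(df)(L)=df(V)+\sum_{i=1}^n X_i\bigl(df(X_i)\bigr)=Vf+\sum_{i=1}^n X_i^2 f=Lf$. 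Granting this identity, we compute
\[
\mathcal{J}^L_\mu(df)=\int_M\s(df)(L)~\mu=\int_M(Lf)~\mu=0,
\]
the last equality being precisely condition (\ref{condition}) satisfied by $\mu\in\mathcal{M}_L$. Combining with (ii) gives $\mathcal{J}^L_\mu(\alpha+df)=\mathcal{J}^L_\mu(\alpha)+\mathcal{J}^L_\mu(df)=\mathcal{J}^L_\mu(\alpha)$.

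The only nontrivial point, and hence the main obstacle, is the identity $\s(df)(L)=Lf$ in full generality, which must be extracted from the precise definition of the Stratonovich operator in second order geometry (Schwartz--Emery) rather than merely from the special coordinate computation above. Once this identity is secured, everything else reduces to linearity and the defining property of $\mathcal{M}_L$.
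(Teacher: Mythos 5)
Your proposal is correct and follows essentially the same route as the paper's own proof: both rest on the linearity of the Stratonovich operator $\s$, the identity $\s(df)(L)=Lf$ (i.e.\ $\s(df)=\mathbf{d}^2f$, whose pairing with $L$ is $Lf$), and the defining condition $\int_M(Lf)~\mu=0$ for $\mu\in\mathcal{M}_L$. The only cosmetic difference is that you deduce (i) from (ii) via $\mathcal{J}^L_\mu(df)=0$, whereas the paper applies additivity of $\s$ directly inside the integral; and you make explicit the identity $\s(df)(L)=Lf$ that the paper uses silently.
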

\begin{proof}
  We just prove item i- the other one is almost trivial. Since $\mu$ is in $\mathcal{M}_L$ and $\s(\alpha+
df)=\s\alpha+\s df$ we get
that
\begin{eqnarray*}
 \int_M (\s\alpha+\s df) (L)~\mu&=&\int_M\s\alpha(L)~\mu+\int_M (Lf)~\mu\\
&=&0+\int_M\s\alpha(L)~\mu.
\end{eqnarray*}

Thus
\[
  \mathcal{J}^L_\mu(\alpha+df)= \mathcal{J}^L_\mu(\alpha)
\]
\end{proof}

Thus $ \mathcal{J}^L_\mu$ induce a linear functional on the first cohomology group
\[ \mathcal{J}^L_\mu: H^1(M,\mr)\rightarrow \mr\] by $ \mathcal{J}^L_\mu(\xi)= \mathcal{J}^L_\mu(\alpha)$, for any closed
1-form
$\alpha\in\xi$. The
Homological Poincare duality implies that there is a 1-cycle $\rho_\mu^L\in H_1(M,\mr)$ such that
\[
  \mathcal{J}^L_\mu(\alpha)=<\alpha,\rho_\mu^L>
\]
for any closed 1-form $\alpha$.

\begin{corollary}
Let $X$ be a L-diffusion $\mu\in \mathcal{M}_L$. If the associated cycle $\rho_\mu^L$ is the null homology class then
$ \mathcal{J}^L_\mu(\alpha)=0$ for any closed 1-form $\alpha$.
\end{corollary}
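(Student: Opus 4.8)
The plan is to derive the corollary directly from the Poincar\'e-duality representation established immediately above its statement, so the argument reduces to a one-line consequence of that identity together with the bilinearity of the homology--cohomology pairing.

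First I would recall the two facts already in place. By the Lemma, $\mathcal{J}^L_\mu$ annihilates every exact form, hence it descends to a well-defined linear functional on $H^1(M,\mathbb{R})$; and by the homological Poincar\'e duality invoked in the text, this functional is realized as pairing against a fixed class $\rho_\mu^L \in H_1(M,\mathbb{R})$, namely
\[
\mathcal{J}^L_\mu(\alpha) = \langle \alpha, \rho_\mu^L \rangle
\]
for every closed 1-form $\alpha$. The crucial point I would then use is that this pairing $\langle \cdot, \cdot \rangle \colon H^1(M,\mathbb{R}) \times H_1(M,\mathbb{R}) \to \mathbb{R}$ is bilinear and, in particular, factors through homology in its second argument, so that its value depends only on the class of $\rho_\mu^L$ and not on any choice of representing cycle or of closed representative $\alpha$.

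Granting the hypothesis that $\rho_\mu^L$ is the null class in $H_1(M,\mathbb{R})$, I would conclude that for an arbitrary closed 1-form $\alpha$ one has $\mathcal{J}^L_\mu(\alpha) = \langle \alpha, 0 \rangle = 0$, which is the asserted vanishing. There is no genuine obstacle here: all the analytic content --- the construction of $\s\alpha$, the invariance condition (\ref{condition}) defining $\mathcal{M}_L$, and the descent to cohomology through the Lemma --- has already been absorbed into the duality identity, and what remains is only the formal observation that a linear functional represented by the zero class is identically zero. If anything, the single point deserving a word of care is the well-definedness of the pairing on classes, which guarantees that the null homology class forces the value $0$ uniformly over all closed representatives $\alpha$ rather than merely for a distinguished one.
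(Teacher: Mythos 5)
Your proposal is correct and coincides with the paper's own (implicit) argument: the paper states this corollary as an immediate consequence of the duality identity $\mathcal{J}^L_\mu(\alpha)=\langle\alpha,\rho_\mu^L\rangle$, which holds on cohomology classes precisely because the Lemma shows $\mathcal{J}^L_\mu$ kills exact forms, and pairing against the null class gives $0$. Your write-up simply spells out this one-line observation, including the well-definedness point, so there is nothing to add.
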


In particular we can consider diffusions with $L=\frac{1}{2}\Delta_M+V$ for a $V\in\Gamma(TM)$ then
\[
 \mathcal{M}_L=\{\mu=k\phi~\mu_g,~k\in\mr\}
\]
for some smooth positive function $\phi$ (see, for example, Ikeda and Watanabe \cite[Proposition 4.5]{IW}). Using this
result we get the following:

\begin{proposition}\label{p1}
Let $X$ be a diffusion with infinitesimal generator $L=\frac{1}{2}\Delta_M+V$ then
\begin{eqnarray*}
  \mathcal{J}^L_{\mu}(\alpha)&=&\frac{1}{2}\int_Mg(\alpha^\sharp,2V-\nabla \ln(\phi))~\mu\\
&=&\frac{1}{2}\int_Mg(\alpha,2V^\flat-d \ln(\phi))~\mu
\end{eqnarray*}
for all $\alpha\in\Omega^1(M)$ and $\mu\in\mathcal{M}_L$. Here $\sharp$ denotes the natural isomorphism
$^\sharp:\Omega^1(M)\rightarrow \Gamma(TM)$ and $\flat$
denotes the natural isomorphism $^\flat:\Gamma(TM)\rightarrow \Omega^1(M)$.
\end{proposition}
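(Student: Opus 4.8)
The plan is to first obtain an explicit coordinate-free expression for $\s\alpha(L)$ when $L=\tfrac12\Delta_M+V$, and then to integrate it against $\mu=k\phi\,\mu_g$ and shift one derivative off $\alpha$ by the divergence theorem. To begin, fix a local orthonormal frame $e_1,\dots,e_n$ and use the identity $\Delta_M=\sum_i\big(e_i^2-\nabla_{e_i}e_i\big)$, so that
\[
L=\tfrac12\sum_i e_i^2+\Big(V-\tfrac12\sum_i\nabla_{e_i}e_i\Big).
\]
This presents $L$ in the shape of the Example, with drift field $\tilde V=V-\tfrac12\sum_i\nabla_{e_i}e_i$ and second-order part $\tfrac12\sum_i e_i^2$. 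Since the pairing $\s\alpha(\cdot)$ is linear in its diffusor argument, the recipe of the Example applies term by term, preserving coefficients (replace the drift $Y$ by $\alpha(Y)$ and each square $X_i^2$ by $X_i\alpha(X_i)$); this gives
\[
\s\alpha(L)=\alpha(V)-\tfrac12\sum_i\alpha(\nabla_{e_i}e_i)+\tfrac12\sum_i e_i\big(\alpha(e_i)\big).
\]

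Next I would apply metric compatibility of the Levi--Civita connection, $e_i(\alpha(e_i))=(\nabla_{e_i}\alpha)(e_i)+\alpha(\nabla_{e_i}e_i)$, so that the two frame-dependent $\nabla_{e_i}e_i$ terms cancel and only the trace $\sum_i(\nabla_{e_i}\alpha)(e_i)=\operatorname{div}(\alpha^\sharp)$ survives. This yields the frame-independent formula
\[
\s\alpha(L)=\alpha(V)+\tfrac12\,\operatorname{div}(\alpha^\sharp)=g(\alpha^\sharp,V)+\tfrac12\,\operatorname{div}(\alpha^\sharp),
\]
which I would sanity-check on $\alpha=df$: there it returns $Vf+\tfrac12\Delta f=Lf$, as it must.

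I would then substitute this into $\mathcal{J}^L_\mu(\alpha)=\int_M\s\alpha(L)\,\mu$ with $\mu=k\phi\,\mu_g$ and integrate by parts the divergence term. On the closed manifold $M$ the divergence theorem gives $\int_M\operatorname{div}(k\phi\,\alpha^\sharp)\,\mu_g=0$, and expanding $\operatorname{div}(k\phi\,\alpha^\sharp)=k\phi\,\operatorname{div}(\alpha^\sharp)+k\,g(\nabla\phi,\alpha^\sharp)$ turns the divergence term into $-\int_M g(\nabla\ln\phi,\alpha^\sharp)\,\mu$, using $\nabla\phi=\phi\,\nabla\ln\phi$. Collecting terms produces
\[
\mathcal{J}^L_\mu(\alpha)=\tfrac12\int_M g\big(\alpha^\sharp,\,2V-\nabla\ln\phi\big)\,\mu,
\]
and the second displayed identity follows immediately from the musical isomorphisms, since $g(\alpha^\sharp,Y)=g(\alpha,Y^\flat)$ and $(\nabla\ln\phi)^\flat=d\ln\phi$.

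The main obstacle is the first step: pinning down $\s\alpha(L)$ correctly. One has to handle the non-parallelism of the frame (the $\nabla_{e_i}e_i$ corrections coming from writing $\Delta_M$ in a frame) and confirm they cancel after using metric compatibility, so that the answer is genuinely frame-independent; keeping the factor $\tfrac12$ attached to $\Delta_M$ consistent throughout is where an error is easiest to make. Everything afterward is a routine integration by parts together with the tautological passage between $\sharp$ and $\flat$.
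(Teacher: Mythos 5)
Your proof is correct, and it reaches the paper's key intermediate identity $\s\alpha(L)=\tfrac12\,\textrm{div}(\alpha^\sharp)+g(\alpha^\sharp,V)$ by a genuinely different route. The paper decomposes the \emph{form}: it writes $\alpha=\sum_{i=1}^N f_i\,dh_i$, evaluates $\s(f_i\,dh_i)(L)$ directly from the defining rules of the Stratonovich operator (so that the carr\'e-du-champ term $\tfrac12 g(\nabla f_i,\nabla h_i)$ appears), and then collapses everything with the classical identity $\textrm{div}(f\nabla h)=f\Delta_M h+g(\nabla f,\nabla h)$; this is global and frame-free, but leans on the standard fact that every smooth 1-form is a finite sum $\sum f_i\,dh_i$. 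You instead decompose the \emph{operator}: writing $\Delta_M=\sum_i(e_i^2-\nabla_{e_i}e_i)$ in a local orthonormal frame puts $L$ in the sum-of-squares-plus-drift shape of the paper's Example, whose formula $\s\alpha(X^2)=X\alpha(X)$ (together with fiberwise linearity of $\s\alpha$ on $\tau M$) gives the pointwise expression, after which the Leibniz rule for the induced connection on $T^*M$ and metric compatibility cancel the frame terms $\alpha(\nabla_{e_i}e_i)$ and identify the trace $\sum_i(\nabla_{e_i}\alpha)(e_i)$ with $\textrm{div}(\alpha^\sharp)$. Your route avoids the global decomposition of $\alpha$ at the cost of a local frame computation and a frame-independence check; the paper's avoids any connection bookkeeping at the cost of invoking the decomposition of $\alpha$. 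From the identity onward the two arguments coincide: both integrate against $\mu=k\phi\,\mu_g$, use $\textrm{div}(\phi\alpha^\sharp)=\phi\,\textrm{div}(\alpha^\sharp)+g(\nabla\phi,\alpha^\sharp)$ with the divergence theorem on the closed manifold $M$, and pass to the $\flat$ form via the musical isomorphisms. One terminological quibble: the identity $e_i(\alpha(e_i))=(\nabla_{e_i}\alpha)(e_i)+\alpha(\nabla_{e_i}e_i)$ is the Leibniz rule for the dual connection, not metric compatibility; compatibility enters only when you equate $\sum_i(\nabla_{e_i}\alpha)(e_i)$ with $\textrm{div}(\alpha^\sharp)$.
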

\begin{proof}
 Any 1-form $\alpha$ in $M$ can be written as
\[
 \alpha=\sum_{i=1}^Nf_i~dh_i
\]
for some smooth functions $\{f_i,h_i\}_{i=1}^N$ and $N$ big enough. Thus
\begin{eqnarray*}
  \s\alpha\left(\frac{1}{2}\Delta_M+\nabla f\right)
&=&\frac{1}{2}\sum_{i=1}^N(f_i~\Delta_M h_i+g(\nabla f_i,\nabla h_i)+2f_i~g(\nabla h_i,\ V)\\
&=&\frac{1}{2}\sum_{i=1}^N(\textrm{div}(f_i~\nabla h_i)+2f_i~g(\nabla h_i,V))\\
&=&\frac{1}{2}\textrm{div}(\alpha^\sharp)+g(\alpha^\sharp,V)
\end{eqnarray*}
where $\alpha^\sharp=\sum_{i=1}^Nf_i~\nabla h_i$.

Then, using that $\mu=k\phi~\mu_g$, the properties of the divergence operator and the divergence theorem we get
that
\begin{eqnarray*}
  \mathcal{J}^L_{\mu}(\alpha)
&=&\frac{k}{2}\int_M(\textrm{div}(\alpha^\sharp)+2~g(\alpha^\sharp,V))\phi~\mu_g\\
&=&\frac{k}{2}\int_M\Big(\textrm{div}(\phi\alpha^\sharp)-g(\alpha^\sharp,\nabla
\phi)+2\phi~g(\alpha^\sharp,V)\Big)~\mu_g\\
&=&\frac{k}{2}\int_Mg(\alpha^\sharp,2\phi~V-\nabla \phi)~\mu_g\\
&=&\frac{1}{2}\int_Mg(\alpha^\sharp,2V-\nabla \ln(\phi))~\mu
\end{eqnarray*}
\end{proof}

\begin{remark}\label{r1}
 The function $\phi$ is such that the de-Rham-- Kodaira decomposition of the  1-form $2\phi V^\flat$  is given by
\[
 \phi V^\flat=\frac{1}{2}d\phi+\delta\beta+\gamma.
\]
 with $\gamma$ a harmonic in $\Omega^1(M)$ and $\beta$ in $\Omega^2(M)$ (see, for example, Ikeda and Watanabe \cite[pg. 294]{IW}). Also, from the proof of the proposition above, the de
Rham--Kodaira decomposition of $\phi V^\flat$ and some facts of differential geometry we get that
\[
  \mathcal{J}^L_\mu(\alpha)=k\int_Mg(\alpha,\gamma)~\mu_g=k<\alpha,\gamma>_{\mu_g},
\]
for any $\alpha\in[\alpha]\in H^1(M,\mr)$.
\end{remark}

When $L=\frac{1}{2}\Delta_M+\nabla F$ for a smooth function $F$ it is known that $\phi=\exp(2F)$ (see,
for example, Ikeda and Watanabe \cite[pg. 294]{IW}).

\begin{corollary}
Let $X$ be a diffusion with infinitesimal generator $L=\frac{1}{2}\Delta_M+\nabla F$, where $F$ is a smooth, then
we have that $ \mathcal{J}^L_{\mu}\equiv0$ for all $\mu\in\mathcal{M}_L$.
\end{corollary}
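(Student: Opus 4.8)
The plan is to apply Proposition 1 directly, since the hypothesis $L=\frac{1}{2}\Delta_M+\nabla F$ is the special case $V=\nabla F$ of the operator treated there. First I would invoke the cited fact that in this gradient case the density is explicitly $\phi=\exp(2F)$, so that $\mathcal{M}_L$ consists of the measures $\mu=k\exp(2F)\mu_g$. The whole argument then reduces to substituting $V=\nabla F$ and $\phi=\exp(2F)$ into the formula
\[
\mathcal{J}^L_\mu(\alpha)=\frac{1}{2}\int_M g(\alpha^\sharp,\,2V-\nabla\ln(\phi))~\mu
\]
supplied by Proposition 1.

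The key computation is to show that the vector field appearing in the integrand vanishes identically. With $V=\nabla F$ we have $2V=2\nabla F$, while $\nabla\ln(\phi)=\nabla\ln(\exp(2F))=\nabla(2F)=2\nabla F$. Hence $2V-\nabla\ln(\phi)=2\nabla F-2\nabla F=0$, and the integrand $g(\alpha^\sharp,0)=0$ for every $\alpha$. Therefore $\mathcal{J}^L_\mu(\alpha)=0$ for all $\alpha\in\Omega^1(M)$ and all $\mu\in\mathcal{M}_L$, which is exactly the assertion $\mathcal{J}^L_\mu\equiv0$.

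I do not anticipate any genuine obstacle here: the result is a one-line corollary of Proposition 1 once the identity $\phi=\exp(2F)$ is in hand. The only point requiring a moment's care is the logarithmic derivative identity $\nabla\ln(\exp(2F))=2\nabla F$, which is immediate but is the crux that forces the cancellation. One could alternatively phrase the conclusion through Remark 1: since $L=\frac{1}{2}\Delta_M+\nabla F$ makes $\phi V^\flat=\exp(2F)\,dF=\frac{1}{2}d(\exp(2F))=\frac{1}{2}d\phi$ an exact form, its harmonic part $\gamma$ in the de Rham--Kodaira decomposition is zero, whence $\mathcal{J}^L_\mu(\alpha)=k\langle\alpha,\gamma\rangle_{\mu_g}=0$. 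I would present the direct substitution as the main proof, perhaps noting this Hodge-theoretic reformulation as a consistency check, since it explains conceptually why the functional must vanish: a gradient drift produces no harmonic circulation and hence no nontrivial asymptotic cycle.
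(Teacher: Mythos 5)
Your proposal is correct and is exactly the argument the paper intends: the corollary is stated immediately after Proposition \ref{p1} and the citation of the fact $\phi=\exp(2F)$, so the paper's (implicit) proof is precisely the substitution $V=\nabla F$, $\nabla\ln\phi=2\nabla F$ forcing the integrand $g(\alpha^\sharp,2V-\nabla\ln\phi)$ to vanish. Your additional remark that the harmonic part $\gamma$ of $\phi V^\flat=\frac{1}{2}d\phi$ vanishes is likewise consistent with Remark \ref{r1} and is a nice conceptual cross-check, but it is the same computation in Hodge-theoretic clothing.
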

In general $ \mathcal{J}^L_{\mu}$ is not always zero, for example when $L=\frac{1}{2}\Delta_M+V$ with $V$ a harmonic vector field.

The following result will be useful to characterize $L$ diffusions and to handle with the second order operator $L$.
\begin{lemma}\label{caracteriza-difusiones}
 A stochastic process $X$ is an $L$-diffusion if and only for  any 1-form $\alpha$
\[\int \alpha~\delta X=\int \s\alpha(L)(X)~dt+\textrm{local martingale}.\]
\end{lemma}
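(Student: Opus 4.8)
The plan is to reduce everything to real-valued semimartingales. By the defining identity $\int \alpha~\delta X = \int \s\alpha~\mathbb{D}X$, and writing any 1-form locally as $\alpha=\sum_{i=1}^N f_i~dh_i$ with $f_i,h_i\in C^\infty(M)$ (exactly as in the proof of Proposition \ref{p1}), I would use the substitution/chain rule for the Stratonovich integral \cite{emery}, namely $\int f_i~dh_i~\delta X=\int f_i(X)~\delta(h_i(X))$, to obtain
\[
 \int \alpha~\delta X=\sum_{i=1}^N\int f_i(X)~\delta(h_i(X)).
\]
Since each $f_i(X)$ and $h_i(X)$ is a real semimartingale, the whole problem becomes a drift computation in $\mr$. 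The algebraic core, valid for any $L$ and independent of $X$, is the identity $\s\alpha(L)=\sum_{i=1}^N\big(f_i\,Lh_i+\tfrac12\,\Gamma(f_i,h_i)\big)$, where $\Gamma(g,h)=L(gh)-g\,Lh-h\,Lg$ is the carr\'e du champ operator; this is a purely formal check against the definition of $\s$, and it is exactly the computation underlying the Example, where $\tfrac12\Gamma(g,h)=\sum_j(X_jg)(X_jh)$.

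For the forward implication I would assume $X$ is an $L$-diffusion, so that $g(X)-g(X_0)-\int Lg(X)~dt$ is a local martingale for every $g\in C^\infty(M)$; in particular $f_i(X)$ and $h_i(X)$ have drifts $Lf_i(X)\,dt$ and $Lh_i(X)\,dt$. Converting each Stratonovich integral to It\^o form introduces the Fisk--Stratonovich correction $\tfrac12 d[f_i(X),h_i(X)]$, and the finite-variation part of $\int f_i(X)\,d(h_i(X))$ is $\int f_i(X)Lh_i(X)\,dt$. Collecting these, the total drift of $\int\alpha~\delta X$ is $\int\sum_i\big(f_i\,Lh_i+\tfrac12\Gamma(f_i,h_i)\big)(X)~dt=\int\s\alpha(L)(X)~dt$, while every remaining term is a stochastic integral of a continuous adapted integrand against a local martingale, hence a local martingale.

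The one genuinely analytic point, which I expect to be the main obstacle, is justifying that for an $L$-diffusion one has $d[g(X),h(X)]=\Gamma(g,h)(X)\,dt$, i.e.\ that the quadratic covariation drift is governed by $L$. I would establish this by applying the diffusion property to the product $gh$ together with the It\^o product rule: the drift of $g(X)h(X)$ equals $L(gh)(X)\,dt$, whereas the product rule produces drift $(g\,Lh+h\,Lg)(X)\,dt$ plus the drift of $[g(X),h(X)]$; comparing the two and using that $[g(X),h(X)]$ has finite variation (so it coincides with its own drift) pins down the correction term as $\Gamma(g,h)(X)\,dt$ and closes the forward direction.

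For the converse, suppose the stated identity holds for every 1-form $\alpha$ and specialize to an exact form $\alpha=df$. Here $\int df~\delta X=f(X)-f(X_0)$ is a general property of the Stratonovich integral, and the algebraic identity above with $f_1=1,\ h_1=f$ gives $\s(df)(L)=Lf$, since $L1=0$ forces $\Gamma(1,f)=0$. The hypothesis then reads $f(X)-f(X_0)-\int Lf(X)~dt=\textrm{local martingale}$ for every $f\in C^\infty(M)$, which is precisely the definition of an $L$-diffusion. This yields the equivalence.
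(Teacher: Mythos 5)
Your proof is correct, and it actually fills a gap that the paper's own computation leaves silent. Both arguments share the same skeleton: decompose $\alpha=\sum_i f_i\,dh_i$, reduce the forward direction to the defining martingale property applied to component functions, and obtain the converse by specializing to exact forms (using $\int df~\delta X=f(X)-f(X_0)$ and $\s(df)(L)=Lf$); the converse halves are essentially identical. The difference is in the forward direction. The paper stays inside second-order calculus and writes $\int\s\alpha~\mathbb{D}X=\sum_i\int a_i~\s df_i~\mathbb{D}X$ and, at the end, identifies $\sum_i a_i Lf_i$ with $\s\alpha(L)$; read literally, both identities are false, since $\s(a_i\,df_i)=da_i\cdot df_i+a_i\,\s df_i$ and $\s\alpha(L)=\sum_i\bigl(a_iLf_i+QL(a_i,f_i)\bigr)$. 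The two omitted contributions are equal for an $L$-diffusion --- each is $\sum_i\tfrac12[a_i(X),f_i(X)]=\sum_i\int \tfrac12\Gamma(a_i,f_i)(X)\,dt$ in your notation --- so the paper's conclusion stands, but only because the two omissions cancel, and the paper never says so. Your route makes exactly this cancellation explicit: you pass to real semimartingales via $\int f_i\,dh_i~\delta X=\int f_i(X)~\delta(h_i(X))$, convert Stratonovich to It\^o, and prove the key bracket identity $[g(X),h(X)]=\int\Gamma(g,h)(X)\,dt$ by the product-rule/Doob--Meyer comparison, which is precisely why the correction terms reproduce the carr\'e du champ part $QL$ of $\s\alpha(L)$. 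What your approach buys is a complete ledger of the drift, with every cross term accounted for; what the paper's buys is brevity and the formal elegance of manipulating only integrals against $\mathbb{D}X$, at the cost of two compensating abuses of notation.
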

\begin{proof}
 Since any 1-form $\alpha\in\Omega^1(M)$ can be written as
\[
 \alpha=\sum_ia_i df_i
\]
we get, by the properties of the stochastic integral, that
\begin{eqnarray*}
 \int \alpha~\delta X&=&\int \s\alpha~\mathbb{D}X\\
&=&\sum_{i}\int a_i~\s df_i~\mathbb{D}X\\
&=&\sum_{i}\int a_id\left(\int\s df_i~\mathbb{D}X\right)\\
&=&\sum_{i}\int a_id\left(\int Lf_i(X)~dt\right)+\textrm{local martingale}\\
&=&\sum_{i}\int a_i\int Lf_i(X)~dt+\textrm{local martingale}\\
&=&\int\s\alpha(L)(X)~dt+\textrm{local martingale}
\end{eqnarray*}
On the other side, assume that $\int \alpha ~\delta X=\int \s\alpha(L)(X)~dt+\textrm{local martingale}$ for any 1-form $\alpha\in \Omega^1(M)$. Taking $\alpha=df$ we get that
\[
 f(X)-f(X_0)=\int Lf(X)~dt+\textrm{local martingale}.
\]
Thus $X$ is an $L$-diffusion.
\end{proof}


In \cite{ochi} Y. Ochi associates to a diffusion $X$ with infinitesimal generator $L=\frac{1}{2}\Delta+V$ a semimartingale $\tilde X^\lambda$, $0\leq\lambda<\infty$, on the dual space of $\Omega^1(M)$ by
\[
 \tilde X_t^{\lambda}(\alpha)=\frac{1}{\sqrt{\lambda}}\int_0^{\lambda t}\alpha~\delta X.
\]
to study ocupation time laws and homogenization theorems.
Using a result of M. Arnaudon in \cite[pp. 253]{arnaudon} we observe that the martingale part and the bounded variation part of $X^\lambda_t$ can be written, using Schwartz geometry, as
\[
 \frac{1}{\sqrt{\lambda}}\int_0^{\lambda t}\alpha ~dX^m\hspace{1cm} A_t^\lambda(\alpha)=\frac{1}{\sqrt{\lambda}}\int_0^{\lambda t}\s \alpha ~DX
\]
where $dX^m$ is a 1-vector and $DX$ is a 2-vector such that $\mathbb{D}X=dX^m+DX$ and given, in local coordinates $(x_1,\ldots,x_n)$, by
\[
 dX^m=dM^i\partial_i\hspace{2cm} DX=dA^i\partial_i+\frac{1}{2}d[M^i,M^j]\partial_{ij}
\]
where $M^i$ and $A^i$ are the martingale and bounded variation part of the real process $x_i(X)$ \cite[pp. 254]{arnaudon}.

The Lemma \ref{caracteriza-difusiones}, joint to the calculations on the proof of Proposition \ref{p1}, imply that the bounded variation part $A_t^\lambda$ of $X_t^\lambda$ is characterized by
\[
 A_t^\lambda(\alpha)=\frac{1}{\sqrt{\lambda}}\int_0^{\lambda t} \s\alpha(L)(X)~dt=\frac{1}{\sqrt{\lambda}}\int_0^{\lambda t} \left(\frac{1}{2}\textrm{div}(\alpha^\sharp)+g(\alpha^\sharp,V)\right)(X)~dt
\] as Y. Ochi observes in \cite[pp. 255]{ochi}.
Also, he proves that $Y_t^\lambda$ (Doob decomposition) have continuous versions that converges in probability, as $\lambda\rightarrow\infty$, to a Wiener process $Z$ with mean functional $0$ and covariance functional $<\alpha,\beta>(t\wedge s)$.

If we assume that the measure $\mu$ is an invariant ergodic measure, then by the ergodic theorem and the Lemma 2 we have that
\begin{eqnarray*}
  \mathcal{J}^L_\mu(\alpha)
&=&\int_M\s\alpha(L)~\mu\\
&=&\lim_{t\rightarrow\infty}\frac{1}{t}\int_0^t\s\alpha(L)(X)~ds\hspace{2cm} \mu-\textrm{a.e.}\\
&=&\lim_{t\rightarrow\infty}\frac{1}{t}\int_0^t\s\alpha~\mathbb{D}X\hspace{2,9cm} \mu-\textrm{a.e.}\\
&=&\lim_{t\rightarrow\infty}\frac{1}{t}\int_0^t\alpha~\delta X\hspace{3,2cm} \mu-\textrm{a.e.}
\end{eqnarray*}
where $\delta$ denotes the Stratonovich differential (see \cite[Proposition 7.4]{emery}). The calculations above 
prove the following proposition.

\begin{proposition}\label{p2}
 Let $\mu$ be an invariant ergodic measure. Then there is a 1-cycle $\rho_\mu^L\in H_1(M,\mr)$ such that for any closed 1-form $\alpha$ we have that
\[
\lim_{t\rightarrow\infty}\frac{1}{t}\int_0^t\alpha~\delta X= <[\alpha],\rho_\mu^L>\hspace{2cm} \mu-\textrm{a.e.}
\]
where $[\alpha]$ is the associated cohomology class in $ H^1(M,\mr)$
\end{proposition}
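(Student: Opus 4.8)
The plan is to realize the right-hand side as the value of the functional $\mathcal{J}^L_\mu$ and then to recognize the time-averaged Stratonovich integral as a Birkhoff average. First I would take $\rho_\mu^L\in H_1(M,\mr)$ to be the cycle produced by Homological Poincar\'e duality from the linear functional $\mathcal{J}^L_\mu$ on $H^1(M,\mr)$, so that by construction
\[
<[\alpha],\rho_\mu^L>=\mathcal{J}^L_\mu(\alpha)=\int_M\s\alpha(L)~\mu
\]
for every closed 1-form $\alpha$. Thus it suffices to prove the pathwise identity $\lim_{t\to\infty}\frac{1}{t}\int_0^t\alpha~\delta X=\int_M\s\alpha(L)~\mu$ holding $\mu$-a.e.

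Next I would decompose the Stratonovich integral using Lemma \ref{caracteriza-difusiones}, which gives
\[
\int_0^t\alpha~\delta X=\int_0^t\s\alpha(L)(X)~ds+N_t,
\]
where $N_t$ is a continuous local martingale with $N_0=0$. Dividing by $t$, the bounded-variation term $\frac{1}{t}\int_0^t\s\alpha(L)(X)~ds$ is exactly a continuous-time Birkhoff average of the smooth, and hence bounded, function $\s\alpha(L)\in C^\infty(M)$. Since $\mu$ is invariant and ergodic for the $L$-diffusion, the ergodic theorem yields
\[
\frac{1}{t}\int_0^t\s\alpha(L)(X)~ds\longrightarrow\int_M\s\alpha(L)~\mu\qquad\mu\text{-a.e.}
\]

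The main obstacle is to show that the martingale contribution is asymptotically negligible, that is $N_t/t\to 0$ almost surely. Because $M$ is compact and $\alpha$ is smooth, the integrand and the diffusion coefficients are bounded, so the quadratic variation $\langle N\rangle_t$ grows at most linearly, $\langle N\rangle_t\le Ct$. I would then invoke the strong law of large numbers for continuous local martingales: on $\{\langle N\rangle_\infty=\infty\}$ one has $N_t/\langle N\rangle_t\to0$, and combined with the linear bound this gives $N_t/t\to0$ (alternatively, a Dambis--Dubins--Schwarz time change reduces $N$ to a time-changed Brownian motion, for which the same estimate is immediate); on $\{\langle N\rangle_\infty<\infty\}$ the martingale converges and the conclusion is trivial. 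Combining the three steps gives the claimed $\mu$-a.e. limit, and the identification from the first paragraph then completes the proof.
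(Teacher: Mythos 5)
Your proof is correct and follows essentially the same route as the paper: take $\rho_\mu^L$ to be the Poincar\'e dual of the functional $\mathcal{J}^L_\mu$, decompose $\int\alpha~\delta X$ via Lemma \ref{caracteriza-difusiones} into $\int_0^t\s\alpha(L)(X)~ds$ plus a local martingale, and apply the ergodic theorem to the time average of $\s\alpha(L)$. Your write-up is in fact slightly more complete than the paper's, whose chain of $\mu$-a.e.\ equalities silently absorbs the martingale contribution, whereas you justify $N_t/t\to0$ explicitly through the linear growth bound $\langle N\rangle_t\le Ct$ and the strong law of large numbers for continuous local martingales.
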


Therefore, when $L=\frac{1}{2}\Delta_M+V$ e $\mu$ is the only ergodic measure associated to $L$, we get that
\[
 \mathcal{J}^L_\mu(\alpha)=\lim_{\lambda\rightarrow\infty}\frac{A_1^\lambda(\alpha)}{\sqrt{\lambda}}.
\]

\section{Lyapunov One forms}

Given a $L$-diffusion $X$ we can define the transition probability function $P_t(x,U)$ for a measurable set $U$, a real
$t>0$ and a point $x$ as follows
\[
 P_t(x,U)=\me[I_U(X_t^x)]=\mP\left[X_t^x\in U\right].
\]

We say that a subset $U$ of $M$ is invariant under $X$ if for any $x\in U$ we have that $P_t(x,U)=1$ for all $t\geq0$.

\begin{definition}\label{def-lyapunov-form}
 Let $X$ be an $L$-diffusion in $M$ and $U$ be a closed subset in $M$ invariant under $X$. A closed 1-form $\beta$ in
$\Omega^1(M\backslash U)$ is called a Lyapunov 1-form for $(X,U)$ if
$\s\beta(L)<0$, in $M\backslash U$.
\end{definition}

Given a Lyapunov 1-form for $(X,U)$, consider the function $f:[0,\infty]\times (M\backslash U)\rightarrow \mr$ defined by
\[
 f(t,x)=\me\left[\int_0^t\s\beta(L)(X_s^x)ds\right]
\]
It is easy to see that $\partial_tf(0,x)=\s\beta(L)(x)<0$. Also, using Lemma \ref{caracteriza-difusiones} and the definition of Stratonovich integral, we get
\begin{eqnarray*}
 f(t,x)&=&\me\left[\int_0^t\s\beta(L)(X_s^x)ds\right]\\
&=&\me\left[\int_0^t \s\beta(L)~\mathbb{D}X_s^x\right]\\
&=&\me\left[\int_0^t\beta~\delta X_s^x\right]
\end{eqnarray*}
Therefore, in principle, we can use $f(x,t)$ to study the behavior of the paths of $X$ for small $t>0$ as we see in the following examples.



\begin{example}
 Let $S^{n}\subset \mr^{n+1}$ the sphere and $V$ be the vector field over $S^n$ given by the gradient of the height function $h(x_1,\ldots,x_{n+1})=x_1$, this is
\[
 V(x)=(1,0,\ldots,0)-x_1x
\]
for any $x\in S^n$. Consider the stochastic differential equation $dx_t=V(x_t)~\circ dB_t$ for $B_t$ the $\mr$-Brownian motion. Using the It\^o formula we get that is infinitesimal generator is given by $L=\frac{1}{2}V^2$.

To determine $\mathcal{M}_L$ we first observe that for any $f:S^n\rightarrow \mr$ we have that
\[
 Lf(x_1,\vec{y})=(1-x_1^2)\left((1-x_1^2)\partial_1^2f-2x_1(\partial_1f+\vec{y}\cdot d(\partial_1f))+\vec{y}\cdot df\right)+x_1^2\textrm{Hess}(f)(\vec{y},\vec{y}).
\]
By this formula it is easy to see that $Lf(N)=Lf(S)=0$ for $N=(1,0,\ldots 0)$ and $S=(-1,0,\ldots 0)$. Consider $f_1,f_2:S^n\rightarrow \mr$ given by $f_1(x)=\frac{1}{2}x_1^2$  and $f_2(x)=x_1$, then a simple calculation shields that
$$Lf_1(x)=(1+3x_1^2)(1-x_1^2)\hspace{1cm}\textrm{and}\hspace{1cm}Lf_2(x)=-2x_1(1-x_1^2).$$
Therefore the only measures such in $\mathcal{M}_L$ are convex combination of $\{\delta_N,~\delta_S\}$.

Consider the invariant set given by $W=\{N,S\}$. Let $f=\ln(1-x_1^2)$ be a function in $S^n\backslash W$ then $Lf(x)=-2(1-x_1^2)$ and therefore $\omega=df$ defines a Lyapunov one form for $x_t$. We observe that
\[
 \me[f(x_t)]=f(x)-2\int_0^t\me[1-x_1(s)^2]~ds
\]
implies
\[
 f(x)-2t\leq\me[f(x_t)].
\]
Fix $k>2t$ and let $A_k$ be a subset of $\Omega$ given by
\[
 A_k=\{f(x_t)\leq f(x)-k\}
\]
then
\begin{eqnarray*}
 f(x)-2t&\leq&\int_{A_k}f(x_t)~\mP+\int_{A^C}f(x_t)~\mP\\
&\leq&(f(x)-k)\mP(A_k)
\end{eqnarray*}
therefore
\[
 \mP(A_k)\leq\frac{2t-f(x)}{k-f(x)}.
\]

\end{example}
\begin{example}
Let $T^2=([0,1]\times [0,1])/\sim$ where $\sim$ is the equivalent relation identifying $(x,0)\sim (x,1)$ and $(0,y)~(1,y)$. Let $V$ the vector field $V$ given, in local coordinates, by $ V=\sin(2\pi x)\partial_x+\cos(2\pi x)\partial_y$. Consider the following stochastic differential equation
\[
dz_t=V(z_t)~\circ dB_t.
\]
The It\^o formula implies that the infinitesimal generator is given by $L=\frac{1}{2} V^2$. Therefore, the measures $\mu$ given by
\[
 \mu=\lambda \delta_{\{0\}\times S^1}+(1-\lambda) \delta_{\{1/2\}\times S^1}\hspace{.5cm} 0\leq\lambda\leq1
\]
where $S^1\simeq (\{k\}\times [0,1])/\sim$, are contained in $\mathcal{M}_L$. Consider the invariant set given by $W=(\{1/2\}\times [0,1])/\sim $ and the  1-form defined by $\beta=dy/\sim$. Clearly
\[
 \s\beta(L)=-\pi\sin(2\pi x)^2\leq0~\Rightarrow~ \int_{W^C} \s\beta(L)~\delta_{(\{0\}\times [0,1])/\sim}=0
\]
and there are not Lyapunov 1-forms. If $W=(\{1/2\}\times [0,1])/\sim\cup (\{0\}\times [0,1])/\sim$   then $\beta=dy/\sim$ is a Lyapunov 1-form and
\[
 \me\left[\int_0^t\beta~\delta z_t\right]=-\pi\me\left[\int_0^t(\sin(2\pi z_s)^2~ds\right]<0.
\] 

Also, if we consider the function $f:T^2\backslash W\rightarrow \mr$ given by $f(x,y)=\ln(\sin(2\pi x)^2)$ then $Lf(x,y)=-2\pi^2\sin^2(2\pi x)$ and
\[
 \me[f(z_t)]=f(z_0)-2\pi^2\int_0^t\me[(\sin(2\pi x_s))^2]~ds<f(z_0).
\] As before, fix $k>2\pi^2$ and let $A_k$ be a subset of $\Omega$ given by
\[
 A_k=\{f(x_t)\leq f(x)-k\}
\]
then
\[
 \mP(A_k)\leq\frac{2t-f(x)}{k-f(x)}.
\]

\end{example}

So, Lyapunov 1-forms for $(X,U)$ can give us information about the behavior of the paths of $X$ outside the invariant set.
Now we want to prove the following theorem which is similar to Theorem 2 of \cite{Farber} and study the existence of these forms.

\begin{definition}
 Let $X$ be an $L$-diffusion in $M$ and $W$ be a closed subset in $M$ invariant under $X$. We say that a measure in $\mathcal{M}_L$ is coherent relative to $W$ if there exist an open neighborhood $U$ of $W$ such that $\mu(U)=0$. Denote by $\mathcal{M}_L^W$ the set of coherent measures.
\end{definition}

\begin{theorem}
 Let $X$ be a diffusion in $M$, $W$ be a closed invariant set for $X$. Then

\begin{itemize}
 \item [i-] if $\mathcal{M}^W_L\not=\emptyset$ and there is a non trivial cohomology class $\xi$ in
$H^1(W^C)$, there is a Lyapunov 1-form $\beta\in\xi$ for $(X,W)$ if and only if for any $\mu\in\mathcal{M}^W_L$, we have
\[
 \mathcal{J}^L_\mu(\beta)<0~\textrm{for all}~\beta\in\xi.
\]
\item [ii-] if $\mathcal{M}^W_L=\emptyset$ there exists Lyapunov 1-forms for $(X,W)$ for any $\xi$ in
$H^1( W^C)$.
\end{itemize}

\end{theorem}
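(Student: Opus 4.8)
The plan is to translate the statement into a single analytic question about the generator $L$ and then attack it by Hahn--Banach duality, exactly as in Farber's Theorem 2, with the non-compactness of $W^C$ absorbed by the coherence condition in the spirit of Latschev. First I would fix a smooth closed representative $\beta_0\in\xi$, so that every closed form in $\xi$ is $\beta_0+df$ with $f\in C^\infty(W^C)$. By linearity of $\s$ and the identity $\s(df)(L)=Lf$ (read off from Lemma \ref{caracteriza-difusiones} with $\alpha=df$, since $\int df~\delta X=f(X)-f(X_0)$ has bounded variation part $\int Lf~dt$), one has $\s(\beta_0+df)(L)=\s\beta_0(L)+Lf$. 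Writing $u:=\s\beta_0(L)\in C^\infty(W^C)$, the existence of a Lyapunov 1-form in $\xi$ is therefore equivalent to the solvability of
\[
 u+Lf<0 \quad \text{on } W^C,\qquad f\in C^\infty(W^C),
\]
so both items reduce to deciding when $u$ can be pushed strictly negative by a coboundary $Lf$.

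The necessity in item i- is then immediate. If $\beta=\beta_0+df$ is Lyapunov and $\mu\in\mathcal{M}^W_L$, then $\mu$ is a nonzero nonnegative measure whose support is a compact subset of $W^C$ (because $\mu(U)=0$ for a neighborhood $U$ of $W$), and $\s\beta(L)<0$ on that support; hence $\mathcal{J}^L_\mu(\beta)=\int_M\s\beta(L)~\mu<0$. The same support argument, together with the locality of $L$ and $\mu\in\mathcal{M}_L$ (extend any $h\in C^\infty(W^C)$ to $\tilde h\in C^\infty(M)$ agreeing with $h$ near $\mathrm{supp}\,\mu$, so $\int Lh~\mu=\int L\tilde h~\mu=0$), shows that $\mathcal{J}^L_\mu$ is constant on $\xi$, so the condition is genuinely a statement about the class, as written.

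For the sufficiency in i- and for ii- I would run a separation argument on an exhaustion. Choose neighborhoods $U_1\supset U_2\supset\cdots$ of $W$ with $\bigcap_n\overline{U_n}=W$, and set $K_n=M\setminus U_n$, a compact exhaustion of $W^C$. On $C(K_n)$ consider the subspace $\mathcal{L}_n=\{\,Lf|_{K_n}:f\in C^\infty(M)\,\}$ and the open convex cone $\mathcal{N}_n=\{g:g<0 \text{ on } K_n\}$; solvability of $u+Lf<0$ on $K_n$ means $(u+\mathcal{L}_n)\cap\mathcal{N}_n\neq\emptyset$. If this fails, separating the affine set $u+\mathcal{L}_n$ from the open cone $\mathcal{N}_n$ produces a nonzero measure $\mu_n$ on $K_n$: boundedness below on the affine set forces $\langle\mu_n,Lf\rangle=0$ for all $f$, the cone property forces $\mu_n\geq0$, and the separation gives $\langle\mu_n,u\rangle\geq0$. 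Extending $\mu_n$ by zero to $M$ and normalising, $\mu_n$ is a probability measure in $\mathcal{M}_L$ supported in $K_n\subset W^C$, hence coherent, with $\mathcal{J}^L_{\mu_n}(\beta_0)=\int u~\mu_n\geq0$. In case ii- no such $\mu_n$ exists since $\mathcal{M}^W_L=\emptyset$, and in case i- its existence contradicts the hypothesis $\mathcal{J}^L_\mu(\beta_0)<0$; either way $u+Lf<0$ is solvable on every $K_n$ by some $f_n\in C^\infty(M)$.

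The remaining, and main, difficulty is to pass from these level-wise subsolutions $f_n$ on $K_n$ to a single $f\in C^\infty(W^C)$ with $u+Lf<0$ on all of $W^C$. A naive partition-of-unity gluing fails because $L$ is second order: $L(\chi f)\neq\chi\,Lf$, and the cross terms destroy the sign. What must be excluded is the escape of mass to $W$; indeed the global separation problem on the non-compact $W^C$ would in general furnish an invariant measure concentrating on $W$, which lies outside the coherent class $\mathcal{M}^W_L$, and this is exactly why coherence is the right hypothesis. I would resolve this by upgrading the level-wise solvability to a uniform one, seeking $f_n$ with $u+Lf_n\leq-\varepsilon_n<0$ together with control of $f_n$ and its derivatives on the overlaps $K_n\setminus K_{n-1}^\circ$, and then assembling $f$ along the exhaustion so that near $W$ one only exploits the freedom to let $f$ grow while keeping $u+Lf$ negative. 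Establishing this uniform control near $W$, following Latschev's idea, is the step I expect to be the crux of the argument.
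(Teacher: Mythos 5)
Your reduction of the problem to the solvability of $u+Lf<0$ on $W^C$ and your proof of the necessity direction in i- are correct and agree with the paper. But the sufficiency direction is where the theorem lives, and there your argument stops short: the exhaustion $K_n=M\setminus U_n$ only yields, for each $n$, some $f_n$ with $u+Lf_n<0$ on $K_n$, and you yourself acknowledge that passing from these level-wise subsolutions to a single $f\in C^\infty(W^C)$ with $u+Lf<0$ on all of $W^C$ is unresolved ("the step I expect to be the crux"). Since $L$ is second order, partition-of-unity gluing indeed fails, and no uniformity near $W$ is established; so the proposal does not prove the theorem. This is a genuine gap, not a presentational one.

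The paper avoids the gluing problem entirely by making the separation \emph{global from the start}, exploiting compactness of $M$ rather than an exhaustion of $W^C$. It works in the Banach space $C^0(M)$ with the affine set
\[
\mathcal{C}_{\beta}=\{\s\beta(L)+Lg,\ g\in\mathcal{D}(L)\cap C^2(W^C)\},
\]
where $\mathcal{D}(L)$ consists of those $g$ for which $Lg$ is continuous on all of $M$ (this continuity requirement is the device that replaces your exhaustion), and the open convex cone $\mathcal{C}_W=\{f\in C^0(M):f<0\ \textrm{on}\ W^C\}$. If $\mathcal{C}_\beta\cap\mathcal{C}_W=\emptyset$, one single Hahn--Banach separation produces a signed measure $\mu$ on the compact $M$; the cone condition forces $\mu\geq 0$ with support in $W^C$, the affine structure forces $\int_M\s\beta(L)\,\mu\geq 0$ and $\int_M Lg\,\mu=0$ for $g\in C^2(W^C)$, and a cutoff $\rho$ with $\rho=1$ near $W$, $\rho=0$ near $\mathrm{supp}(\mu)$ upgrades the latter to $\int_M Lf\,\mu=0$ for every $f\in C^2(M)$. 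Hence $\mu\in\mathcal{M}^W_L$ with $\mathcal{J}^L_\mu(\beta)\geq0$, contradicting the hypothesis in i- (and contradicting $\mathcal{M}^W_L=\emptyset$ in ii-). The payoff is that nonemptiness of $\mathcal{C}_\beta\cap\mathcal{C}_W$ directly hands you a single $g$, valid on all of $W^C$, with $\s(\beta+dg)(L)=\s\beta(L)+Lg<0$, so $\beta+dg\in\xi$ is the desired Lyapunov form --- no passage to the limit along $K_n$, no uniform estimates near $W$. If you want to salvage your approach, the fix is not to strengthen the gluing but to replace your spaces $C(K_n)$ and $\mathcal{L}_n$ by $C^0(M)$ and $\mathcal{C}_\beta$ as above; your own observation that escaping mass toward $W$ is the enemy is exactly what the coherence-plus-cutoff step in the paper is designed to kill.
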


\begin{proof}\begin{itemize}
              \item[i-] If there exists a Lyapunov 1-form $\beta$ for $(X,W)$ in the class $\xi$ then, using that $\mu\in\mathcal{M}^W_L$ satisfy
$\mu(W)=0$ and the definition of Lyapunov 1-form we obtain
\[ \mathcal{J}^L_\mu(\beta)=\int_{W^C}\s\beta(L)~\mu<0,\]
as desired.

On the other side we assume that for any $\beta\in\xi$ we have that $ \mathcal{J}^L_\mu(\beta)<0$ and fix some
$\beta\in\xi$.
We use a similar argument to the one used by Latschev in \cite{Latschev}.
Consider the following sets
%
\[
 \mathcal{C}_{\beta}=\{f=\s\beta(L)+Lg,~g\in \mathcal{D}(L)\cap C^2(W^C)\}
\]
\[
 \mathcal{C}_{W}=\{f\in C^0(M), ~f<0~\textrm{in}~W^C\},
\]
where $\mathcal{D}(L)$ is the set of functions $g$ such that  $Lg$ is continuous in $M$, and assume that $\mathcal{C}_{\beta}\cap \mathcal{C}_{W}=\emptyset$
. Then, the Hahn-Banach theorem (see, for
example, Rudin \cite[pg. 59]{rudin}) implies 
the existence of a signed measure $\mu$ such that
\begin{eqnarray*}
 1)&& \int_M f~\mu<0~\textrm{if}~f\in \mathcal{C}_{W}\hspace{.5cm}\textrm{and}\hspace{.5cm}\\
2)&&\int_M f~\mu
\geq0~\textrm{if}~f\in \mathcal{C}_{\beta}.
\end{eqnarray*}
Equation 1) says that $\mu$ is a positive measure with support contained in $W^C$. Using that $\mathcal{C}_{\beta}$ is affine we get that equation 2) implies
\[
 \int_M \s\beta(L)~\mu=\int_M f~\mu\geq0
\]
for any $f\in \mathcal{C}_\beta$ or, equivalently, $\int Lg~\mu=0$ for any $ g\in C^2(W^C)$.

Let $\rho:M\rightarrow [0,1]$ be a smooth function such that $\rho=1$ in a neighborhood of $W$ and $\rho=0$ in a neighborhood of $\textrm{supp}(\mu)$ then for any $f\in C^2(M)$ we get that
\begin{eqnarray*}
 \int_ML(f)~\mu
&=&\int_ML(\rho f+(1-\rho)f)~\mu\\
&=&\int_ML(\rho f)~\mu+\int_ML((1-\rho)f)~\mu\\
&=&0.
\end{eqnarray*}
because $(1-\rho)f$ is 0 in a neighborhood of $W$. Then $\mu\in\mathcal{M}^W_L$ such that
$$\int_M \s\tilde \beta(L)~\mu\geq 0,$$
contradicting the hypothesis.
Therefore $ \mathcal{C}_\beta\cap
\mathcal{C}_{W}\not=\emptyset$. So there is a Lyapunov 1-form
in $\xi$.

\item[ii-] Given $\beta$ in some $\xi$ we define $\mathcal{C}_{W}$ 
and $\mathcal{C}_{\beta}$ as above. Assume that $ \mathcal{C}_\beta\cap
\mathcal{C}_{W}=\emptyset$. 
Doing the same calculations we will find a non zero measure $\mu\in\mathcal{M}_L^W$ which is a contradiction. So there is a  a Lyapunov 1-form in $\xi$.

             \end{itemize}

\end{proof}

\begin{corollary}
Assume that there is a Lyapunov 1-form $\beta$ for $(W,X)$ in a non trivial cohomology class $\xi\in H_1(W^C)$. Then for any
1-cycle
$\rho_\mu^L$ associated by $\mathcal{J}^L_\mu$ to some $\mu\in\mathcal{M}_L$ with support
contained in $W^C$  we have that
\[<\xi,\rho_\mu^L>< 0.\]
\end{corollary}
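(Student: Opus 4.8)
The plan is to reduce the claim to a direct evaluation of the duality pairing and then exploit the defining sign condition of a Lyapunov 1-form together with the fact that $\mu$ is supported away from $W$. First I would observe that, since $\mu$ has support contained in the open set $W^C$, the functional $\mathcal{J}^L_\mu$ only sees the behaviour of a 1-form on $W^C$: for $\alpha\in\Omega^1(W^C)$ the quantity $\int_{W^C}\s\alpha(L)~\mu$ is well defined. Running the argument of Lemma 1 on $W^C$ (using $\int_{W^C}(Lg)~\mu=0$ for $g\in C^\infty(W^C)$, which follows from $\mu\in\mathcal{M}_L$ by the same cutoff argument used in the proof of Theorem 1: multiply $g$ by a bump $\rho$ equal to $1$ near $\textrm{supp}(\mu)$ and supported in $W^C$, so that $L(\rho g)=Lg$ on $\textrm{supp}(\mu)$ and $\rho g\in C^\infty(M)$), the functional $\mathcal{J}^L_\mu$ descends to $H^1(W^C,\mr)$, and Poincar\'e duality for $W^C$ furnishes a class $\rho_\mu^L\in H_1(W^C,\mr)$ with $\mathcal{J}^L_\mu(\alpha)=<[\alpha],\rho_\mu^L>$ for every closed $\alpha$ on $W^C$.

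With this in hand, the heart of the proof is a one-line computation. Since $\beta\in\xi$ is a closed representative,
\[
<\xi,\rho_\mu^L>=\mathcal{J}^L_\mu(\beta)=\int_M\s\beta(L)~\mu=\int_{W^C}\s\beta(L)~\mu,
\]
the last equality because $\mu$ is supported in $W^C$. The definition of a Lyapunov 1-form gives $\s\beta(L)<0$ pointwise on $W^C$, so the integrand is strictly negative on the support of $\mu$.

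It remains to upgrade the non-strict inequality $\int_{W^C}\s\beta(L)~\mu\leq 0$ to a strict one, which is where the only real care is needed. Because $M$ is compact, the support of $\mu$ is a compact subset of the open set $W^C$, hence lies at positive distance from $W$; as $\beta$ is smooth and $L$ has $C^2$ coefficients, the continuous function $\s\beta(L)$ attains a maximum $-\varepsilon<0$ on this support. Since $\mu$ is a nonzero positive (invariant) measure, $\mu(M)>0$, and therefore $\mathcal{J}^L_\mu(\beta)\leq-\varepsilon\,\mu(M)<0$, giving $<\xi,\rho_\mu^L>\,<0$.

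The main obstacle is conceptual rather than computational: one must be careful that the pairing is taken in the (co)homology of $W^C$ rather than of $M$, since a Lyapunov form need not extend to a closed form on all of $M$ and $\xi$ need not lie in the image of $H^1(M,\mr)\rightarrow H^1(W^C,\mr)$. Confirming that the support hypothesis lets $\mathcal{J}^L_\mu$ define a genuine class $\rho_\mu^L\in H_1(W^C,\mr)$ is the step that makes the pairing $<\xi,\rho_\mu^L>$ meaningful; once that is in place the sign is immediate from compactness of the support of $\mu$.
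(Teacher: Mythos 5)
Your proposal is correct and follows essentially the same route as the paper: the corollary is an immediate consequence of the first half of the proof of Theorem 1, namely that a measure supported in $W^C$ gives $\mathcal{J}^L_\mu(\beta)=\int_{W^C}\s\beta(L)~\mu<0$, combined with the duality identification $\mathcal{J}^L_\mu(\beta)=\,<\xi,\rho_\mu^L>$. Your added care about the pairing living in the (co)homology of $W^C$ (via the same cutoff argument the paper uses in Theorem 1) and about strictness of the inequality from compactness of $\textrm{supp}(\mu)$ only makes explicit details the paper leaves implicit.
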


A point $x$ is said to be recurrent if for any neighborhood $U$ of $x$ we have that
\[
\mP\left[\left\{\omega\in \Omega,~\exists~\{t_j\}\uparrow\infty~\textrm{s.t.}~X_{t_j}^x(\omega)\in  U,~j\in
\mn\right\}\right]=1
\]

Clearly,  given an $L$-diffusion we can split the manifold $M$ into two subsets, the set $\mathcal{R}$ of recurrent
points and the set of transient points $\mathcal{T}$. It is known that the support of any invariant measure is a closed set that lies
in the closure of $\mathcal{R}$.

\begin{corollary}
 Consider $(\overline{\mathcal{R}},X)$ where $\mathcal{R}$ is the invariant set of recurrent points, then there is a function $f:M\rightarrow\mr$ such that $Lf$ is continuous and $Lf|_{\overline{\mathcal{R}}^C}<0$.

\end{corollary}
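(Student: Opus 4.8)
The plan is to recognize the sought-after function as a Lyapunov 1-form in the trivial cohomology class and then to invoke Theorem 1. First I would record the identity $\s(df)(L)=Lf$ for an exact form $df$; this is exactly the computation of Example 1 specialized to $\alpha=df$, and it is consistent with the case $\alpha=df$ inside the proof of Lemma \ref{caracteriza-difusiones}. With this in hand, producing a function $f:M\rightarrow\mr$ with $Lf$ continuous and $Lf|_{\overline{\mathcal{R}}^C}<0$ becomes the same thing as producing a Lyapunov 1-form $\beta=df$ for the pair $(X,\overline{\mathcal{R}})$ whose class in $H^1(\overline{\mathcal{R}}^C)$ is trivial, with the representative chosen in $\mathcal{D}(L)\cap C^2(\overline{\mathcal{R}}^C)$ so that $Lf$ is automatically continuous on all of $M$.

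Setting $W=\overline{\mathcal{R}}$, the strategy is then to apply part ii- of Theorem 1, which guarantees a Lyapunov 1-form in every class of $H^1(W^C)$, in particular the trivial one, as soon as $\mathcal{M}_L^W=\emptyset$. Thus the entire argument reduces to checking that there is no nonzero coherent measure relative to $\overline{\mathcal{R}}$.

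To establish $\mathcal{M}_L^{\overline{\mathcal{R}}}=\emptyset$, I would argue by contradiction. Let $\mu\in\mathcal{M}_L^{\overline{\mathcal{R}}}$. Coherence supplies an open neighborhood $U$ of $\overline{\mathcal{R}}$ with $\mu(U)=0$, forcing $\mathrm{supp}(\mu)\subseteq\overline{\mathcal{R}}^C$. On the other hand, every $\mu\in\mathcal{M}_L$ satisfies the infinitesimal invariance condition $\int_M Lf\,\mu=0$ for all $f$ and so is a stationary measure for $X$; by the quoted fact that the support of any invariant measure lies in $\overline{\mathcal{R}}$, this gives $\mathrm{supp}(\mu)\subseteq\overline{\mathcal{R}}$. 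The two inclusions are incompatible unless $\mu=0$, so $\mathcal{M}_L^{\overline{\mathcal{R}}}$ is empty and part ii- of Theorem 1 delivers the desired $f$.

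The hard part will be the support argument, and specifically the passage from infinitesimal invariance $\int_M Lf\,\mu=0$ to genuine invariance of $\mu$, which is what licenses the localization $\mathrm{supp}(\mu)\subseteq\overline{\mathcal{R}}$. This identification of infinitesimally invariant measures with stationary measures holds under the standing hypotheses on $L$ but ought to be invoked explicitly; granting it, the remaining steps follow at once from Theorem 1.
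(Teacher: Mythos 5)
Your proposal is correct and is essentially the paper's intended argument: the corollary is Theorem 1 part ii- applied to $W=\overline{\mathcal{R}}$ with the trivial cohomology class, where $\mathcal{M}_L^W=\emptyset$ because a coherent measure would have support disjoint from a neighborhood of $\overline{\mathcal{R}}$ while every nonzero $\mu\in\mathcal{M}_L$ has support contained in $\overline{\mathcal{R}}$, and the continuity of $Lf$ on all of $M$ comes from the sets $\mathcal{C}_\beta$ in the proof of Theorem 1, exactly as you note. The gap you flag — upgrading the infinitesimal condition $\int_M Lf~\mu=0$ to genuine stationarity so that the quoted fact about supports of invariant measures applies — is real but is equally implicit in the paper, so your treatment matches its level of rigor.
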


\end{document}